\begin{document}
\newtheorem{theorem}{Theorem}[section]
\newtheorem{definition}[theorem]{Definition}

\newtheorem{lemma}[theorem]{Lemma}
\newtheorem{corollary}[theorem]{Corollary}
\newtheorem{example}[theorem]{Example}
\newtheorem{remark}[theorem]{\it Remark}
\newtheorem{proposition}[theorem]{Proposition}

\newtheorem{Quest}{Question}

\def\bull{{\vrule height.9ex width.8ex depth-.1ex}}
\newcommand{\finesp}{\hfill $\bull$\vspace{0.5\baselineskip}}
\newcommand{\fin}{\hfill $\bullet$}
\newcommand{\Proof}{\noindent {\it Proof. }}
\newcommand{\be}{\begin{eqnarray*}}
\newcommand{\ee}{\end{eqnarray*}}
\newcommand{\R}{\bf R}
\newcommand{\N}{\bf N}
\newcommand{\lra}{\longrightarrow}
\newcommand{\Ra}{\Rightarrow}
\newcommand{\Lra}{\Leftrightarrow}
\newcommand{\ra}{\rightarrow}
\newcommand{\Llra}{\Longleftrightarrow}
\newcommand{\ind}{\scriptscriptstyle}
\newcommand{\e}{\varepsilon}
\newcommand{\dd}{\Delta}
\newcommand{\cla}[1]{{\, \overline{\!#1}}}
\newcommand{\rest}[1]{\mid_{#1}}
\newcommand{\A}{ {\mathcal A} }
\newcommand{\F}{ {\mathcal F} }
\newcommand{\Imp}{ {\mathcal I}{\it mp} }
\newcommand{\In}{ {\mathcal I}{\it n} }
\newcommand{\Se}{ {\mathcal S} }
\newcommand{\As}{ {\sf A} }
\newcommand{\Fs}{ {\sf F} }
\newcommand{\Lc}{ {\mathcal L} }
\newcommand{\U}{{\mathcal U}}
\newcommand{\V}{{\mathcal V}}
\newcommand{\ASS}{ {\sf A}{\mathcal S} }
\newcommand{\ASC}{ {\sf A}{\mathcal C} }

\title{ON A GENERALIZATION OF STRONGLY $\eta$-CONVEX FUNCTIONS VIA FRACTAL SETS}

\author[Zaroni Robles,  Jos\'e E. Sanabria and Rainier V. S\'anchez C.]{Zaroni Robles$^{1}$, Jos\'e E. Sanabria$^2$$^{*}$ and Rainier V. S\'anchez C.$^3$}

\address{$^{1}$ Facultad de Ciencias B\'{a}sicas, Universidad del Atl\'{a}ntico, Barranquilla, Colombia}
\email{zrobles@mail.uniatlantico.edu.co (Zaroni Robles)}

\address{$^{2}$ Facultad de Educaci\'on y Ciencias, Universidad de Sucre, Sicelejo, Colombia.}
\email{jesanabri@gmail.com (Jos\'e Sanabria)}

\address{$^{3}$ Instituto Superior de Formaci\'on Docente Salom\'e Ure\~{n}a, Recinto
Luis Napole\'on Nu\~{n}ez Molina, Rep\'ublica Dominicana}
\email{rainiersan76@gmail.com (Rainier V. S\'anchez C.)}

\date{Received: xxxxxx; Revised: yyyyyy; Accepted: zzzzzz.
\newline \indent $^{*}$Corresponding author}

\begin{abstract}
The purpose of this paper is to study a generalization of strongly $\eta$-convex functions using the fractal calculus
developed by Yang \cite{Yang}, namely generalized strongly $\eta$-convex function. Among other results, we obtain some Hermite-Hadamard and Fej\'er type inequalities for this class of functions.
\end{abstract}

\subjclass[2010]{Primary 26D07, 26D15; Secondary 26A51, 39B62}

\keywords{Convex function, strongly $\eta$-convex function, fractal set, generalized strongly $\eta$-convex function.}

\maketitle

\section{Introduction and preliminaries}
Convex sets, convex functions and their generalizations are important in
applied mathematics, especially  in nonlinear programming  and optimization theory. For
example in economics, convexity plays a fundamental role in equilibrium and duality
theory. The convexity of sets and functions has been the subject of many studies in recent years,
among which we can mention several generalized variants;
as done by Awan et al. \cite{AwanNNS} when they introduced the class of
strongly $\eta$-convex functions as natural generalization of $\eta$-convex functions due to Gorji et al \cite{GordjiDS}.
\begin{definition}\cite{AwanNNS}
A function $f:I=[a,b]\subset\mathbb{R}\to\mathbb{R}$ is said to be is said to be strongly $\eta$-convex with respect to $\eta$: $\mathbb{R}\times\mathbb{R}\to\mathbb{R}$ and modulus $c\geq 0$, if
$$f(tx + (1-t)y) \leq f(y) + t\eta(f(x), f(y))-ct\left(1-t\right)\left(x-y\right)^{2},$$
for all $x, y \in I $ and $t\in [0, 1]$.
\end{definition}

The concept of local fractional calculus (also called fractal calculus) has received considerable attention
for its application in non-differentiable problems of science and engineering. In
the sense of Mandelbrot, a fractal set is the one whose Hausdorff dimension strictly exceeds the topological dimension (see \cite{Edgar}, \cite{Falconer} and \cite{Mandelbrot}). Many researchers studied the properties of functions
on fractal space and constructed many kinds of fractional calculus by using different approaches (see
\cite{BabakhaniD} , \cite{CarpinteriCC} and \cite{Yang}). In particular, Yang \cite{Yang} stated the analysis of local fractional functions
on fractal space systematically, which includes local fractional
calculus and the monotonicity of functions. The theory of fractal sets developed by Yang establishes the following. For $0< \alpha \leq 1$, we have the following $\alpha$-type sets:

$\mathbb{Z}^{\alpha}=\left\{ 0^{\alpha},\pm1^{\alpha},\pm2^{\alpha},...,\pm n^{\alpha},...\right\} (\textrm{integer numbers \ensuremath{\alpha}-type}).$

$\mathbb{Q}^{\alpha}=\left\{ m^{\alpha}=\left(\frac{p}{q}\right)^{\alpha}:p,q\in\mathbb{Z},q\neq0\right\} \textrm{ (rational numbers \ensuremath{\alpha}-type)}.$

$\mathbb{J}^{\alpha}=\left\{ m^{\alpha}\neq\left(\frac{p}{q}\right)^{\alpha}:p,q\in\mathbb{Z},q\neq0\right\} \textrm{ (irrational numbers \ensuremath{\alpha}-type)}$.

$\mathbb{R}^{\alpha}=\mathbb{Q^{\alpha}\cup}\mathbb{J}^{\alpha}\textrm{ (real numbers \ensuremath{\alpha}-type)}$.

We call fractal set to $\mathbb{\mathbb{R}}^{\alpha}$ and any subset of it. The following facts are
found in \cite{Yang} and \cite{Yang2}.

If $a^{\alpha}$, $b^{\alpha}$ and $c^{\alpha}$ belong to the set
$\mathbb{\mathbb{R}}^{\alpha}$ of $\alpha$-type real numbers, then we have the following properties:
\begin{enumerate}
\item $a^{\alpha}+b^{\alpha}$ and $a^{\alpha}b^{\alpha}$ belong to the set $\mathbb{R^{\alpha}}$.
\item $a^{\alpha}+b^{\alpha}=b^{\alpha}+a^{\alpha}=(a+b)^{\alpha}=(b+a)^{\alpha}$.
\item $a^{\alpha}+\left(b^{\alpha}+c^{\alpha}\right)=\left(a^{\alpha}+b{}^{\alpha}\right)+c{}^{\alpha}$.
\item $a^{\alpha}b^{\alpha}=b^{\alpha}a^{\alpha}=(ab)^{\alpha}=(ba)^{\alpha}$.
\item $a^{\alpha}\left(b^{\alpha}c^{\alpha}\right)=\left(a^{\alpha}b^{\alpha}\right)c^{\alpha}$.
\item $a^{\alpha}\left(b^{\alpha}+c^{\alpha}\right)=a^{\alpha}b^{\alpha}+a^{\alpha}c^{\alpha}$.
\item $a^{\alpha}+0^{\alpha}=0^{\alpha}+a^{\alpha}=a^{\alpha}$ and $a^{\alpha}1^{\alpha}=1^{\alpha}a^{\alpha}=a^{\alpha}$.
\end{enumerate}
It is important to note that in this theory the number $(a^{2})^{\alpha}\in \mathbb{\mathbb{R}}^{\alpha}$ will be
represented by $a^{2\alpha}$.

Now we introduce some basic definitions about the local factional calculus.
\begin{definition}\cite{Yang}
A non-differentiable function $f:\mathbb{R}\to\mathbb{R}^{\alpha}$, $x\rightarrow f(x)$ is called local fractional continuous at $x_{0}$, if for any $\varepsilon>0$, there exists $\delta>0$, such that
\[
|f(x)-f(x_{0})|<\varepsilon^{\alpha}\]
holds for $|x-x_{0}|<\delta$, where $\varepsilon,\delta\in \mathbb{R}$. If a function f is local fractional continuous on an interval $I$,
we denote $f\in C_{\alpha}(I)$.
\end{definition}

\begin{definition}\cite{Yang}
The local fractional derivative of $f(x)$ of order $\alpha$ at $x=x_{0}$ is defined by
\[f^{(\alpha)}(x_{0})=\left.\dfrac{d^{\alpha}f(x)}{dx^{\alpha}}\right|_{x=x_{0}}=\lim_{x\rightarrow x_{0}}\dfrac{\Delta^{\alpha}(f(x)-f(x_{0}))}{(x-x_{0})^{\alpha}},\]
where $\Delta^{\alpha}(f(x)-f(x_{0}))\cong\Gamma(1+\alpha)(f(x)-f(x_{0}))$ and $\Gamma$ is the familiar Gamma
function.
\end{definition}
Let $f^{(\alpha)}(x)=D^{\alpha}_{x}f(x)$. If there exists $f^{((k+1)\alpha)}(x)=\overbrace{D^{\alpha}_{x}\cdots D^{\alpha}_{x}}^{k+1\mbox{ times}}f(x)$ for any $x\in I\subseteq \mathbb{R}$ , then we denote $f\in D_{(k+1)\alpha}(I)$, where $k=0,1,2,\ldots$.
\begin{definition}\cite{Yang}
Let $f\in C_{\alpha}[a,b]$.  The local fractional integral of $f$ on the interval $[a,b]$ of order $\alpha$ (denoted by $\displaystyle{\,\,}_{a}I_{b}^{(\alpha)}f$) is defined by
\[\displaystyle{\,\,}_{a}I_{b}^{(\alpha)}f(t)=\dfrac{1}{\Gamma(1+\alpha)}\int_{a}^{b}f(t)(dt)^{\alpha}=\dfrac{1}{\Gamma(1+\alpha)}\lim_{\Delta t\rightarrow 0}\sum_{j=0}^{N-1}f(t_{j})(\Delta t_{j})^{\alpha},\]
with $\Delta t=\max\{\Delta t_{0},\Delta t_{1},\ldots,\Delta t_{N-1}\}$ and $\Delta t_{j}=t_{j+1}-t_{j}$ for $j=0,1,\ldots,N-1$, where $a=t_{0}<t_{1}<\cdots<t_{i}<\cdots<t_{N-1}<t_{N}=b$ is a partition of the interval $[a,b]$.
\end{definition}

Here, it follows that $\displaystyle{\,\,}_{a}I_{b}^{(\alpha)}f=0$ if $a=b$ and $\displaystyle{\,\,}_{a}I_{b}^{(\alpha)}f=-\displaystyle{\,\,}_{b}I_{a}^{(\alpha)}f$ if $a<b$. If $\displaystyle{\,\,}_{a}I_{x}^{(\alpha)}f$ there exits for any $x\in [a,b]$, then it is denoted by $f\in I_{x}^{(\alpha)}[a,b]$.\\

In 2019, Sanabria and Robles \cite{SanR} used the local fractional calculus to introduce the following generalized $\eta$-convex function.
\begin{definition}\cite{SanR}
A function $f:I =[a,b]\subset\mathbb{ R}\to\mathbb{ R}^{\alpha}$ is said to be generalized $\eta$-convex with respect to $\eta:\mathbb{R}^{\alpha}\times\mathbb{R}^{\alpha}\to\mathbb{R}^{\alpha}$, if
\begin{equation*}
    f(tx + (1-t)y) \leq f(y) + t^{\alpha}\eta(f(x) , f(y)),
\end{equation*}
for all $x, y \in I$ and $t \in [0, 1]$.
\end{definition}

The family of all generalized $\eta$-convex functions in an interval $I=[a,b]$ is denoted by $\eta$-$GC_{\alpha}(I)$; which is,
$$\eta\mbox{-}GC_{\alpha}(I)=\{f:I=[a,b]\subset\mathbb{R}\to\mathbb{R}^{\alpha}| f \mbox{ is a generalized } \eta\mbox{-convex function}\}.$$

In 2020, S\'anchez and Sanabria introduced the class of generalized strongly convex functions using fractal sets.
Next we define this class of functions.
\begin{definition}\cite{SancS}
A function $f:I\to\mathbb{R^{\alpha}}$ is called
generalized strongly convex with modulus $c$ if
\[
f(tx+(1-t)y)\leq t^{\alpha}f(x)+(1-t)^{\alpha}f(y)-c^{\alpha}t^{\alpha}(1-t)^{\alpha}(x-y)^{2\alpha},
\]
for all $x,y\in I$ and $t\in[0,1]$.
\end{definition}
The family of all generalized strongly convex functions with modulus $c$
is denoted by $GSC_{\alpha}^{c}(I)$; that is, $$GSC_{\alpha}^{c}(I)=\left\{f:I\to\mathbb{R}^{\alpha}|f \mbox{ is generalized strongly convex with modulus }  c\right\}.$$


\section{Main results}
In this section we introduce the definition of generalized strongly $\eta$-convex function and establish some relevant inequalities.
\begin{definition}
A function $f:I =[a,b]\subset\mathbb{ R}\to\mathbb{ R}^{\alpha}$ is said to be \textbf{generalized strongly $\eta$-convex} with respect to $\eta:\mathbb{R}^{\alpha}\times\mathbb{R}^{\alpha}\to\mathbb{R}^{\alpha}$ and modulus $c\geq 0$, if
\begin{equation*}
    f(tx + (1-t)y) \leq f(y) + t^{\alpha}\eta(f(x),f(y))-c^{\alpha}t^{\alpha}\left(1-t\right)^{\alpha}\left(x-y\right)^{2\alpha},
\end{equation*}
for all $x, y \in I$ and $t \in [0, 1]$.
\end{definition}

The family of all generalized strongly $\eta$-convex functions in an interval $I=[a,b]$ is denoted by $\eta$-$GSC^{c}_{\alpha}(I)$; which is,
$$\eta\mbox{-}GSC^{c}_{\alpha}(I)=\{f:I=[a,b]\subset\mathbb{R}\to\mathbb{R}^{\alpha}| f \mbox{ is a generalized strongly } \eta\mbox{-convex function}\}.$$
\begin{remark}
Note that for particular cases of the numbers $0<\alpha\leq 1$ and $c\geq 0$, and the function $\eta$, we recover well known classical concepts
of convex functions as is shown below.
\begin{enumerate}[\upshape (1)]
\item If $\alpha=1$, then generalized strongly $\eta$-convex functions are strongly $\eta$-convex functions.
\item If $c=0$, then generalized strongly $\eta$-convex functions are generalized $\eta$-convex functions.
\item If $\alpha=1$ and $c=0$, then generalized strongly $\eta$-convex functions are $\eta$-convex functions.
\item If $\eta(x^{\alpha},y^{\alpha})=x^{\alpha}-y^{\alpha}$, then generalized strongly $\eta$-convex functions are generalized strongly convex functions.
\end{enumerate}
\end{remark}

If $f\in\eta$-$GSC^{c}_{\alpha}(I)$ and $x=y$ then $f(x)\leq f(x) + t^{\alpha}\eta(f(x),f(x))$; that is, $0^{\alpha}\leq t^{\alpha}\eta(f(x),f(x))$ and hence, $0^{\alpha}\leq \eta(f(x),f(x))$. Also, if $f\in\eta$-$GSC^{c}_{\alpha}(I)$ and $t=1$ then $f(x)\leq f(y)+\eta(f(x),f(y))$, which implies that $f(x)-f(y)\leq \eta(f(x),f(y))$. On the other hand, if $f\in GSC^{c}_{\alpha}(I)$ and $\eta:\mathbb{R}^{\alpha}\times\mathbb{R}^{\alpha}\to\mathbb{R}^{\alpha}$ is a function which satisfies the equality $\eta(a^{\alpha},b^{\alpha})\geq a^{\alpha}-b^{\alpha}$ for all $a^{\alpha},b^{\alpha}\in \mathbb{R}^{\alpha}$, then
\begin{eqnarray*}
f(tx + (1-t)y) &\leq & t^{\alpha}f(x) + (1-t)^{\alpha}f(y)-c^{\alpha}t^{\alpha}\left(1-t\right)^{\alpha}\left(x-y\right)^{2\alpha}\\
&=& f(y)+t^{\alpha}[f(x)-f(y)]-c^{\alpha}t^{\alpha}\left(1-t\right)^{\alpha}\left(x-y\right)^{2\alpha}\\
 & \leq & f(y) + t^{\alpha}\eta(f(x),f(y))-c^{\alpha}t^{\alpha}\left(1-t\right)^{\alpha}\left(x-y\right)^{2\alpha}.
\end{eqnarray*}
This shows that $f\in\eta$-$GSC^{c}_{\alpha}(I)$.

\begin{example}
\rm
For a function $f\in GSC^{c}_{\alpha}(I)$, we may find another function $\eta$
other than the function $\eta(x^{\alpha},y^{\alpha})=x^{\alpha}-y^{\alpha}$ such that $f\in\eta$-$GSC^{c}_{\alpha}(I)$. Indeed, since the function $g(x)=x^{2\alpha}$ is generalized convex, by \cite[Theorem 2.8]{SancS} it follows that the function $f(x)=g(x)+c^{\alpha}x^{2\alpha}=g(x)+c^{\alpha}g(x)$ is generalized strongly convex. If $\eta(x^{\alpha},y^{\alpha})=2^{\alpha}x^{\alpha}+ y^{\alpha}$, we have
\begin{eqnarray*}
f(tx + (1-t)y) &=& g(tx + (1-t)y)+c^{\alpha}g(tx + (1-t)y)\\
&\leq& g(y)+ t^{\alpha}\eta(g(x), g(y))+c^{\alpha}\left(tx + (1-t)y\right)^{2\alpha} \, \, \, \,  \mbox{\, \, \, \,   (see \cite[Example 2.3]{SanR})}\\
&=& g(y)+ t^{\alpha}\eta(g(x), g(y))+c^{\alpha} \left(t^{2} x^{2} + t(1-t)2xy+(1-t)^{2}y^{2}\right)^{\alpha}\\
&=& g(y)+ t^{\alpha}\eta(g(x), g(y))+c^{\alpha}\left(t^{2}(x-y)^{2}- t(x-y)^{2}+tx^{2}-ty^{2}+y^{2}\right)^{\alpha}\\
&=& g(y)+ t^{\alpha}\eta(g(x), g(y))+c^{\alpha}\left(t(t-1)(x-y)^{2}+tx^{2}-ty^{2}+y^{2}\right)^{\alpha}\\
&=& g(y)+t^{\alpha}\eta(g(x),g(y))-c^{\alpha}t^{\alpha}(1-t)^{\alpha}\left(x-y\right)^{2\alpha}+c^{\alpha}t^{\alpha}x^{2\alpha}
-c^{\alpha}t^{\alpha}y^{2\alpha}+c^{\alpha}y^{2\alpha}\\
&\leq& g(y)+t^{\alpha}\eta(g(x),g(y))-c^{\alpha}t^{\alpha}(1-t)^{\alpha}\left(x-y\right)^{2\alpha}
+c^{\alpha}t^{\alpha}(2^{\alpha}x^{2\alpha}+y^{2\alpha})+c^{\alpha}y^{2\alpha}\\
&=& g(y)+t^{\alpha}\eta(g(x),g(y))-c^{\alpha}t^{\alpha}(1-t)^{\alpha}\left(x-y\right)^{2\alpha}
+c^{\alpha}t^{\alpha}\eta(g(x),g(y))+c^{\alpha}g(y)\\
&=& f(y)+t^{\alpha}\eta(f(x),f(y))-c^{\alpha}t^{\alpha}(1-t)^{\alpha}\left(x-y\right)^{2\alpha}
\end{eqnarray*}
for all $x,y\in\mathbb{R}$ and $t\in [0,1]$. This shows that $f\in\eta$-$GSC^{c}_{\alpha}(I)$. Note that $f$ is generalized strongly $\eta$-convex with respect to every $\eta(x^{\alpha},y^{\alpha})=a^{\alpha}x^{\alpha} + b^{\alpha}y^{\alpha}$ with $a^{\alpha}\geq 1^{\alpha}$, $b^{\alpha}\geq -1^{\alpha}$ and $x,y\in\mathbb{R}$.
\end{example}

\begin{theorem}
Let $f\in D_{\alpha}(I)$ be a generalized strongly $\eta$-convex function. If $x\in I$ is the minimum
of $f$, then $$f^{\alpha}(x)\dfrac{\left(y-x\right)^{\alpha}}{\Gamma\left(1+\alpha\right)}\geq0\Rightarrow\eta\left(f(y),f(x)\right)\geq c^{\alpha}\left(y-x\right)^{2\alpha}.$$
\end{theorem}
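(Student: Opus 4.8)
The plan is to extract a local-fractional support (gradient) inequality from the defining inequality of generalized strong $\eta$-convexity, and then to let the first-order term drop out under the stated sign hypothesis.

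First I would fix $y\in I$ and apply the definition of generalized strong $\eta$-convexity to the points $y$ and $x$ with parameter $t\in(0,1)$, which, after writing $ty+(1-t)x=x+t(y-x)$ and moving $f(x)$ across, gives
\[
f(x+t(y-x))-f(x)\leq t^{\alpha}\bigl[\eta(f(y),f(x))-c^{\alpha}(1-t)^{\alpha}(y-x)^{2\alpha}\bigr].
\]
The key observation is that multiplying the left-hand side by $\Gamma(1+\alpha)\big/\bigl(t(y-x)\bigr)^{\alpha}$ turns it into precisely the difference quotient whose limit, as $t\to 0^{+}$, defines $f^{(\alpha)}(x)$ (this is where $f\in D_{\alpha}(I)$ is used). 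Dividing the displayed inequality by $t^{\alpha}$ and by $(y-x)^{\alpha}$ and passing to the limit $t\to 0^{+}$, together with $(1-t)^{\alpha}\to 1^{\alpha}$, should yield the support inequality
\[
\eta(f(y),f(x))\geq f^{(\alpha)}(x)\frac{(y-x)^{\alpha}}{\Gamma(1+\alpha)}+c^{\alpha}(y-x)^{2\alpha},
\]
valid for every $y\in I$. Once this is in hand, the hypothesis $f^{(\alpha)}(x)(y-x)^{\alpha}/\Gamma(1+\alpha)\geq 0$ discards the first-order summand and delivers $\eta(f(y),f(x))\geq c^{\alpha}(y-x)^{2\alpha}$ immediately; the assumption that $x$ is the minimum of $f$ is exactly what guarantees that this sign hypothesis holds, since a minimizer forces $f^{(\alpha)}(x)=0^{\alpha}$ in the interior and the correct one-sided sign at an endpoint.

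The step I expect to be the main obstacle is the limit passage combined with the sign bookkeeping of the $\alpha$-type arithmetic. When $y<x$ the factor $(y-x)^{\alpha}$ is a negative $\alpha$-type number, so dividing by it reverses the inequality; I would have to verify that the reversal incurred on division and the reversal incurred when multiplying $(y-x)^{\alpha}$ back in cancel, so that the support inequality emerges in the same form for both $y>x$ and $y<x$. I would also need to justify the termwise limit in $\mathbb{R}^{\alpha}$, relying on the local fractional continuity of the operations. A shortcut that avoids differentiation altogether is to use the minimum directly: since $f(x+t(y-x))\geq f(x)$, the left-hand side of the first display is $\geq 0^{\alpha}$, so $c^{\alpha}t^{\alpha}(1-t)^{\alpha}(y-x)^{2\alpha}\leq t^{\alpha}\eta(f(y),f(x))$; cancelling $t^{\alpha}$ and letting $t\to 0^{+}$ gives the conclusion without computing $f^{(\alpha)}(x)$ at all.
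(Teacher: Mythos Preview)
Your proposal is correct and follows essentially the same route as the paper: apply the defining inequality at $x+t(y-x)$, divide by $t^{\alpha}$, recognize the left side as $f^{(\alpha)}(x)\,(y-x)^{\alpha}/\Gamma(1+\alpha)$ in the limit $t\to 0$, and then use the nonnegativity of that quantity (which the paper derives from the minimum assumption exactly as you indicate) to discard it. Your closing ``shortcut'' that bypasses the derivative entirely by combining $f(x+t(y-x))\geq f(x)$ directly with the $\eta$-convexity inequality is in fact a cleaner argument than the one in the paper and dispenses with the sign bookkeeping you were worried about.
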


\begin{proof}
Suppose that $x\in I$ is the minimum of $f\in\eta$-$GSC^{c}_{\alpha}(I)$,
then $f(x)\leq f(y)$ for all $y\in I$. Since $I$ is a convex subset of $\mathbb{R}$, then $y_{t}=x+t(y-x)\in I$ for all $y\in I$ and $t\in[0,1]$. Thus, we have  $0^{\alpha}\leq f(x+t(y-x))-f(x)$ and dividing above inequality by $t^{\alpha}$, we obtain that
\[0^{\alpha}\leq\dfrac{f(x+t(y-x))-f(x)}{t^{\alpha}}\]
Putting $h=t(y-x)$, we have $t=\dfrac{h}{y-x}$ and hence,
\[0\leq\frac{f(x+h)-f(x)}{h^{\alpha}}(y-x)^{\alpha}=\frac{\Gamma\left(1+\alpha\right)[f\left(x+h\right)
-f\left(x\right)]}{h^{\alpha}}\frac{\left(y-x\right)^{\alpha}}{\Gamma\left(1+\alpha\right)}.\]
Letting $h\rightarrow 0$ we have
\begin{equation*}
0\leq f^{(\alpha)}(x)\frac{\left(y-x\right)^{\alpha}}{\Gamma\left(1+\alpha\right)}.\, \hspace{4cm} (1)
\end{equation*}
Since $f\in\eta$-$GSC^{c}_{\alpha}([a,b])$, then
\begin{eqnarray*}
f(x+t(y-x)) &\leq& f(x)+t^{\alpha}\eta(f(y),f(x))-c^{\alpha}t^{\alpha}(1-t)^{\alpha}(y-x)^{2\alpha},
\end{eqnarray*}
which implies that
\begin{eqnarray*}
\dfrac{\Gamma(1+\alpha)f(x+t(y-x))- f(x)}{t^{\alpha}}\dfrac{1}{\Gamma(1+\alpha)}&\leq& \eta(f(y),f(x))-c^{\alpha}(1-t)^{\alpha}(y-x)^{2\alpha}.
\end{eqnarray*}
Taking limit on both sides as $t\rightarrow 0$, we obtain that
\begin{eqnarray*}
 f^{(\alpha)}(x)\dfrac{(y-x)^{\alpha}}{\Gamma(1+\alpha)}&\leq&\eta(f(y),f(x))-c^{\alpha}(y-x)^{2\alpha}.
\end{eqnarray*}
By (1), we conclude that $\eta(f(y),f(x))-c^{\alpha}(y-x)^{2\alpha}\geq 0$ and hence,
$\eta(f(y),f(x))\geq c^{\alpha}(y-x)^{2\alpha}$.
\end{proof}

The following result is a new refinement of the Hermite-Hadamard type inequality for generalized strongly $\eta$-convex functions.
\begin{theorem}\label{TeoHH}
Let $f\in \eta$-$GSC^{c}_{\alpha}([a,b])$. If $\eta(.,.)$  is bounded from above by $M_{\eta}^{\alpha}$ on $f([a,b])\times f([a,b])$, then
\begin{eqnarray*}
f\left(\frac{a+b}{2}\right)- \left(\frac{M_{\eta}}{2}\right)^{\alpha}
&\leq & \frac{\Gamma(1+\alpha)}{(b-a)^{\alpha}}\left[\displaystyle{\,\,}_{a}I_{b}^{(\alpha)}f(x)-\dfrac{c^{\alpha}}{4^{\alpha}}(b-a)^{3\alpha}A\right]\\
&\leq&\displaystyle\frac{f(a)+f(b)}{2^{\alpha}}+\Gamma(1+\alpha)\left[\frac{\eta(f(a),f(b))+\eta(f(b),f(a))}{2^{\alpha}}B\right.\\
&-&\left.c^{\alpha}\left(b-a\right)^{2\alpha}\left(B-A\right)\right]\\
&\leq&\displaystyle\frac{f(a)+f(b)}{2^{\alpha}}+\Gamma(1+\alpha)\left[M_{\eta}^{\alpha}B-c^{\alpha}\left(b-a\right)^{2\alpha}\left(B-A\right)\right],
\end{eqnarray*}
donde $A:=\dfrac{\Gamma(1+2\alpha)}{\Gamma(1+3\alpha)}$ and $B:=\dfrac{\Gamma(1+\alpha)}{\Gamma(1+2\alpha)}$.
\end{theorem}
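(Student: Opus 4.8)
The plan is to reduce the whole chain to elementary moment integrals on $[0,1]$, using three facts from Yang's calculus: the substitution identity
\[
\frac{1}{\Gamma(1+\alpha)}\int_0^1 f(ta+(1-t)b)\,(dt)^{\alpha}=\frac{1}{\Gamma(1+\alpha)}\int_0^1 f((1-t)a+tb)\,(dt)^{\alpha}=\frac{1}{(b-a)^{\alpha}}\,{}_{a}I_{b}^{(\alpha)}f ,
\]
the moment formula $\frac{1}{\Gamma(1+\alpha)}\int_0^1 t^{k\alpha}\,(dt)^{\alpha}=\frac{\Gamma(1+k\alpha)}{\Gamma(1+(k+1)\alpha)}$ (which for $k=0,1,2$ returns $\frac{1}{\Gamma(1+\alpha)}$, $B$ and $A$, and, together with the additivity $(u+v)^{\alpha}=u^{\alpha}+v^{\alpha}$, gives $\frac{1}{\Gamma(1+\alpha)}\int_0^1 t^{\alpha}(1-t)^{\alpha}\,(dt)^{\alpha}=B-A$), and the arithmetic rule $z^{\alpha}+z^{\alpha}=(2z)^{\alpha}=2^{\alpha}z^{\alpha}$. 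This last rule is what produces and cancels the various $2^{\alpha}$ factors in the statement, so I would keep careful track of it throughout.

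For the two right-hand inequalities I would write the defining inequality of generalized strong $\eta$-convexity twice: once at $ta+(1-t)b$ with $(x,y)=(a,b)$ and once at $(1-t)a+tb$ with $(x,y)=(b,a)$. Since $(a-b)^{2\alpha}=(b-a)^{2\alpha}$, the two modulus terms coincide, and adding the two inequalities bounds $f(ta+(1-t)b)+f((1-t)a+tb)$ by $f(a)+f(b)+t^{\alpha}[\eta(f(a),f(b))+\eta(f(b),f(a))]-2^{\alpha}c^{\alpha}t^{\alpha}(1-t)^{\alpha}(b-a)^{2\alpha}$. Applying ${}_{0}I_{1}^{(\alpha)}$ and inserting the moment values converts the left side into $\frac{2^{\alpha}}{(b-a)^{\alpha}}\,{}_{a}I_{b}^{(\alpha)}f$ and the right side into $\frac{f(a)+f(b)}{\Gamma(1+\alpha)}+[\eta(f(a),f(b))+\eta(f(b),f(a))]B-2^{\alpha}c^{\alpha}(b-a)^{2\alpha}(B-A)$. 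Multiplying by $\frac{\Gamma(1+\alpha)}{2^{\alpha}}$ yields the bound with $\frac{\Gamma(1+\alpha)}{(b-a)^{\alpha}}\,{}_{a}I_{b}^{(\alpha)}f$ on the left; since the subtracted term $\frac{c^{\alpha}}{4^{\alpha}}(b-a)^{3\alpha}A$ inside the middle bracket is nonnegative, the actual middle quantity is no larger than $\frac{\Gamma(1+\alpha)}{(b-a)^{\alpha}}\,{}_{a}I_{b}^{(\alpha)}f$, so the stated middle-to-right inequality follows a fortiori. The last inequality is then immediate from $\eta\leq M_{\eta}^{\alpha}$, the rule $M_{\eta}^{\alpha}+M_{\eta}^{\alpha}=2^{\alpha}M_{\eta}^{\alpha}$, and $B\geq 0^{\alpha}$.

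For the left-hand inequality I would set $u=ta+(1-t)b$ and $v=(1-t)a+tb$, so that $\tfrac{u+v}{2}=\tfrac{a+b}{2}$ and $u-v=(2t-1)(a-b)$, and apply the definition to $u,v$ with $t=\tfrac12$. Bounding $\eta(f(u),f(v))\leq M_{\eta}^{\alpha}$ yields
\[
f\!\left(\frac{a+b}{2}\right)\leq f((1-t)a+tb)+\frac{M_{\eta}^{\alpha}}{2^{\alpha}}-\frac{c^{\alpha}}{4^{\alpha}}(2t-1)^{2\alpha}(b-a)^{2\alpha}.
\]
Applying ${}_{0}I_{1}^{(\alpha)}$, using the substitution identity on the first right-hand term and multiplying by $\Gamma(1+\alpha)$ produces
\[
f\!\left(\frac{a+b}{2}\right)-\Big(\frac{M_{\eta}}{2}\Big)^{\alpha}\leq \frac{\Gamma(1+\alpha)}{(b-a)^{\alpha}}\,{}_{a}I_{b}^{(\alpha)}f-\Gamma(1+\alpha)\frac{c^{\alpha}}{4^{\alpha}}(b-a)^{2\alpha}J,
\]
where $J:=\frac{1}{\Gamma(1+\alpha)}\int_0^1(2t-1)^{2\alpha}\,(dt)^{\alpha}$.

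The main obstacle is the evaluation of $J$ and its reconciliation with the constant $A$ recorded in the statement. Expanding $(2t-1)^{2\alpha}=4^{\alpha}t^{2\alpha}-4^{\alpha}t^{\alpha}+1^{\alpha}$ by additivity and using the moment formula gives $J=4^{\alpha}A-4^{\alpha}B+\frac{1}{\Gamma(1+\alpha)}$, which collapses to $A$ at $\alpha=1$ (recovering the classical midpoint constant $\tfrac{1}{12}$) but for $0<\alpha<1$ satisfies $J\geq A$. Granting this comparison, $-\Gamma(1+\alpha)\frac{c^{\alpha}}{4^{\alpha}}(b-a)^{2\alpha}J\leq-\Gamma(1+\alpha)\frac{c^{\alpha}}{4^{\alpha}}(b-a)^{2\alpha}A$, so the derived bound is dominated by the middle quantity $\frac{\Gamma(1+\alpha)}{(b-a)^{\alpha}}[\,{}_{a}I_{b}^{(\alpha)}f-\frac{c^{\alpha}}{4^{\alpha}}(b-a)^{3\alpha}A]$, closing the first inequality. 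I expect the genuinely delicate points to be the bookkeeping of the $2^{\alpha}$ and $4^{\alpha}$ factors forced by $\mathbb{R}^{\alpha}$-arithmetic, and the justification of $A\leq J$ (equivalently, a Gamma-function estimate) needed to pass from the sharp constant $J$ to the stated $A$.
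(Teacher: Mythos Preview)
Your treatment of the two right-hand inequalities is essentially the paper's: write the defining inequality once at $ta+(1-t)b$ and once at $(1-t)a+tb$, integrate, and average. The paper phrases this as bounding by $\min\{A_1,A_2\}$ and then by the average, but the computation and the moment integrals $\int_0^1 t^{\alpha}(dt)^{\alpha}=\Gamma(1+\alpha)B$ and $\int_0^1 t^{\alpha}(1-t)^{\alpha}(dt)^{\alpha}=\Gamma(1+\alpha)(B-A)$ are identical, as is your observation that subtracting the nonnegative term $\frac{c^{\alpha}}{4^{\alpha}}(b-a)^{3\alpha}A$ only helps.

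The genuine discrepancy is in the left-hand inequality. Your parametrization $u=ta+(1-t)b$, $v=(1-t)a+tb$ produces the modulus factor $(2t-1)^{2\alpha}$, whose ${}_0I_1^{(\alpha)}$-integral $J=4^{\alpha}A-4^{\alpha}B+\tfrac{1}{\Gamma(1+\alpha)}$ coincides with $A$ only at $\alpha=1$; for $0<\alpha<1$ you are left needing the Gamma-function estimate $J\ge A$, which you flag but do not prove. The paper sidesteps this entirely by a different parametrization: it writes
\[
\frac{a+b}{2}=\frac{1}{2}\cdot\frac{a+b-t(b-a)}{2}+\frac{1}{2}\cdot\frac{a+b+t(b-a)}{2},\qquad t\in[0,1],
\]
applies the definition at this midpoint, and then recognizes that as $t$ ranges over $[0,1]$ the two arguments sweep out $[a,\tfrac{a+b}{2}]$ and $[\tfrac{a+b}{2},b]$ respectively, so that the sum of the two integrals over $t$ reconstitutes $\frac{2^{\alpha}}{(b-a)^{\alpha}}\int_a^b f(x)\,(dx)^{\alpha}$. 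In this parametrization the distance between the two arguments is $t(b-a)$, so the modulus term is $\left(\tfrac{c}{4}\right)^{\alpha}t^{2\alpha}(b-a)^{2\alpha}$ and the relevant moment is $\int_0^1 t^{2\alpha}(dt)^{\alpha}=\Gamma(1+\alpha)A$ exactly, with no auxiliary inequality required. Replacing your single full-range parametrization by this midpoint-symmetric one closes the gap you identified and yields the constant $A$ on the nose.
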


\begin{proof}
Since $f\in \eta$-$GSC^{c}_{\alpha}([a,b])$, we have
\begin{eqnarray*}
f\left(\frac{a+b}{2}\right) &=& f\left(\frac{a+b}{4}-\frac{t(b-a)}{4} + \frac{a+b}{4} + \frac{t(b-a)}{4}\right)\\
&=& f\left(\frac{1}{2}\left(\frac{a+b-t(b-a)}{2}\right) +  \frac{1}{2}\left(\frac{a+b+t(b-a)}{2}\right)\right)\\
&\leq & f\left(\frac{a+b-t(b-a)}{2}\right)+ \left(\frac{1}{2}\right)^{\alpha}\eta\left(f\left(\frac{a+b+t(b-a)}{2}\right),f\left(\frac{a+b-t(b-a)}{2}\right)\right)\\
&-&\left(\dfrac{c}{4}\right)^{\alpha}\left(t(b-a)\right)^{2\alpha}\\
&\leq & f\left(\frac{a+b-t(b-a)}{2}\right)+ \left(\frac{M_{\eta}}{2}\right)^{\alpha}-\left(\dfrac{c}{4}\right)^{\alpha}\left(t(b-a)\right)^{2\alpha}\\,
\end{eqnarray*}
which implies that
\begin{eqnarray*}
f\left(\frac{a+b-t(b-a)}{2}\right)
&\geq& f\left(\frac{a+b}{2}\right)-\left(\frac{M_{\eta}}{2}\right)^{\alpha}+\left(\dfrac{c}{4}\right)^{\alpha} t^{2\alpha}(b-a)^{2\alpha},
\end{eqnarray*}
and similarly
\begin{eqnarray*}
f\left(\frac{a+b+t(b-a)}{2}\right)
&\geq& f\left(\frac{a+b}{2}\right)-\left(\frac{M_{\eta}}{2}\right)^{\alpha}+\left(\dfrac{c}{4}\right)^{\alpha} t^{2\alpha}(b-a)^{2\alpha}.
\end{eqnarray*}
Now, using the change of variable technique for local fractional integrals of order $\alpha$, we obtain that
\begin{eqnarray*}
\frac{2^{\alpha}}{(b-a)^{\alpha}}\displaystyle\int_{a}^{b}f(x)\,(dx)^{\alpha}
&=&\frac{2^{\alpha}}{(b-a)^{\alpha}}\left[\displaystyle\int_{a}^{(a+b)/2}f(x)\,(dx)^{\alpha} + \displaystyle\int_{(a+b)/2}^{b}f(x)\,(dx)^{\alpha}\right]\\
&=& \displaystyle\int_{0}^{1}f\left(\frac{a+b-t(b-a)}{2}\right)\,(dt)^{\alpha} + \displaystyle\int_{0}^{1}f\left(\frac{a+b+t(b-a)}{2}\right)\,(dt)^{\alpha}\\
&=& \displaystyle\int_{0}^{1}\left[f\left(\frac{a+b-t(b-a)}{2}\right) + f\left(\frac{a+b+t(b-a)}{2}\right)\right]\,(dt)^{\alpha}\\
&\geq & \displaystyle\int_{0}^{1}\left[2^{\alpha}f\left(\frac{a+b}{2}\right)- M_{\eta}^{\alpha}+\left(\dfrac{c}{2}\right)^{\alpha} t^{2\alpha}(b-a)^{2\alpha}\right]\,(dt)^{\alpha}\\
&=&2^{\alpha}\displaystyle\int_{0}^{1}\left[f\left(\frac{a+b}{2}\right)- \left(\frac{M_{\eta}}{2}\right)^{\alpha}\right]\,(dt)^{\alpha}+\left(\dfrac{c}{2}\right)^{\alpha}(b-a)^{2\alpha}\displaystyle\int_{0}^{1}t^{2\alpha}\,(dt)^{\alpha}\\
&=& 2^{\alpha}\left[f\left(\frac{a+b}{2}\right)- \left(\frac{M_{\eta}}{2}\right)^{\alpha}\right]+\left(\dfrac{c}{2}\right)^{\alpha}(b-a)^{2\alpha}\dfrac{\Gamma(1+\alpha)\Gamma(1+2\alpha)}{\Gamma(1+3\alpha)}.
\end{eqnarray*}
Therefore,
\begin{eqnarray*}
\frac{1}{(b-a)^{\alpha}}\displaystyle\int_{a}^{b}f(x)\, (dx)^{\alpha}
&\geq & f\left(\frac{a+b}{2}\right)- \left(\frac{M_{\eta}}{2}\right)^{\alpha}+\left(\dfrac{c}{4}\right)^{\alpha}(b-a)^{2\alpha}\dfrac{\Gamma(1+\alpha)\Gamma(1+2\alpha)}{\Gamma(1+3\alpha)}
\end{eqnarray*}
and consequently,
\begin{eqnarray*}
\frac{\Gamma(1+\alpha)}{(b-a)^{\alpha}}\left[\displaystyle{\,\,}_{a}I_{b}^{(\alpha)}f(x)-\dfrac{c^{\alpha}}{4^{\alpha}}(b-a)^{3\alpha}\dfrac{\Gamma(1+2\alpha)}{\Gamma(1+3\alpha)}\right]
&\geq & f\left(\frac{a+b}{2}\right)- \left(\frac{M_{\eta}}{2}\right)^{\alpha}.
\end{eqnarray*}
On the other hand, as $f\in\eta$-$GSC^{c}_{\alpha}([a,b])$, we have
$$f(ta+(1-t)b)\leq f(b)+t^{\alpha}\eta(f(a),f(b))-c^{\alpha}t^{\alpha}\left(1-t\right)^{\alpha}\left(b-a\right)^{2\alpha}.$$
Now, applying local fractional integration of order $\alpha$ with respect to $t$ on $[0,1]$:
\begin{eqnarray*}
\displaystyle\int_{0}^{1}f(ta+(1-t)b)\,(dt)^{\alpha}
&\leq & \displaystyle\int_{0}^{1}\left[f(b)+t^{\alpha}\eta(f(a),f(b))-c^{\alpha}t^{\alpha}\left(1-t\right)^{\alpha}\left(b-a\right)^{2\alpha}\right]\,(dt)^{\alpha},
\end{eqnarray*}
which implies that
\begin{eqnarray*}
\frac{1}{(b-a)^{\alpha}}\displaystyle\int_{a}^{b}f(x)\,(dx)^{\alpha}
&\leq & f(b)+\eta(f(a),f(b))\displaystyle\int_{0}^{1}t^{\alpha}\,(dt)^{\alpha}-c^{\alpha}\left(b-a\right)^{2\alpha}\displaystyle\int_{0}^{1}t^{\alpha}\left(1-t\right)^{\alpha}\,(dt)^{\alpha}\\
&= & f(b)+\eta(f(a),f(b))\frac{\Gamma(1+\alpha)\cdot\Gamma(1+\alpha)}{\Gamma(1+2\alpha)}\\
&-&c^{\alpha}\left(b-a\right)^{2\alpha}\Gamma(1+\alpha)\left[\dfrac{\Gamma(1+\alpha)}{\Gamma(1+2\alpha)}-\dfrac{\Gamma(1+2\alpha)}{\Gamma(1+3\alpha)}\right]=A_{1}.
\end{eqnarray*}
Also
\begin{eqnarray*}
\frac{1}{(b-a)^{\alpha}}\displaystyle\int_{a}^{b}f(x)\,(dx)^{\alpha}
&\leq & f(a)+\eta(f(b),f(a))\frac{\Gamma(1+\alpha)\cdot\Gamma(1+\alpha)}{\Gamma(1+2\alpha)}\\
&-&c^{\alpha}\left(b-a\right)^{2\alpha}\Gamma(1+\alpha)\left[\dfrac{\Gamma(1+\alpha)}{\Gamma(1+2\alpha)}-\dfrac{\Gamma(1+2\alpha)}{\Gamma(1+3\alpha)}\right]=A_{2}.
\end{eqnarray*}
Thus, we obtain
\begin{eqnarray*}
\frac{\Gamma(1+\alpha)}{(b-a)^{\alpha}}\displaystyle{\,\,}_{a}I_{b}^{(\alpha)}f(x)
&\leq &\min\{A_{1},A_{2}\}\\
&\leq&\frac{f(a)+f(b)}{2^{\alpha}}+\frac{\eta(f(a),f(b))+\eta(f(b),f(a))}{2^{\alpha}}\frac{\Gamma(1+\alpha)\cdot\Gamma(1+\alpha)}{\Gamma(1+2\alpha)}\\
&-&c^{\alpha}\left(b-a\right)^{2\alpha}\Gamma(1+\alpha)\left[\dfrac{\Gamma(1+\alpha)}{\Gamma(1+2\alpha)}-\dfrac{\Gamma(1+2\alpha)}{\Gamma(1+3\alpha)}\right]\\
&\leq&\frac{f(a)+f(b)}{2^{\alpha}}+\Gamma(1+\alpha)\left[\frac{\eta(f(a),f(b))+\eta(f(b),f(a))}{2^{\alpha}}\frac{\Gamma(1+\alpha)}{\Gamma(1+2\alpha)}\right.\\
&-&\left.c^{\alpha}\left(b-a\right)^{2\alpha}\left[\dfrac{\Gamma(1+\alpha)}{\Gamma(1+2\alpha)}-\dfrac{\Gamma(1+2\alpha)}{\Gamma(1+3\alpha)}\right]\right]\\
&\leq&\frac{f(a)+f(b)}{2^{\alpha}}+\Gamma(1+\alpha)\left[M_{\eta}^{\alpha}\frac{\Gamma(1+\alpha)}{\Gamma(1+2\alpha)}\right.\\
&-&\left.c^{\alpha}\left(b-a\right)^{2\alpha}\left[\dfrac{\Gamma(1+\alpha)}{\Gamma(1+2\alpha)}-\dfrac{\Gamma(1+2\alpha)}{\Gamma(1+3\alpha)}\right]\right].
\end{eqnarray*}
The remainder of the proof follows from the above inequalities.
\end{proof}

\begin{remark}
It is important to note that if $\eta(x^{\alpha},y^{\alpha})= x^{\alpha}-y^{\alpha}$ then Theorem \ref{TeoHH} becomes a result for the generalized strongly convex functions introduced in \cite{SancS}.
\end{remark}

\begin{definition}\cite{SanR}
A function $f:[a,b]\to\mathbb{R}^{\alpha}$ is said to be symmetric with respect to $\frac{a+b}{2}\in [a,b]$, if
$$f(x)=f(a+b-x)$$
for all $x\in [a,b]$.
\end{definition}

The following result is a Fej\'er type inequality for generalized strongly $\eta$-convex functions.
\begin{theorem}\label{TeoF}
Let $f\in\eta$-$GSC^{c}_{\alpha}([a,b])$. If $\eta(.,.)$ is bounded from above on $f([a,b])\times f([a,b])$. Moreover, suppose that $w:[a,b]\to\mathbb{R}_{+}^{\alpha}$ is symmetric with respect to $\frac{a+b}{2}$ and $w\in \displaystyle I_{x}^{(\alpha)}[a,b]$, then
\begin{eqnarray*}
f\left(\frac{a+b}{2}  \right)\displaystyle{\,\,}_{a}I_{b}^{(\alpha)}w(x)- L_{\eta}(a,b)
+\dfrac{c^{\alpha}}{4^{\alpha}}\displaystyle{\,\,}_{a}I_{b}^{(\alpha)}(a+b-2x)^{2\alpha}w(x)\hspace{0.3 cm}\\\\
\leq\displaystyle{\,\,}_{a}I_{b}^{(\alpha)}f(x)w(x)\hspace{7.7 cm}\\\\
\leq \frac{f(a)+f(b)}{2^{\alpha}} \displaystyle{\,\,}_{a}I_{b}^{(\alpha)}w(x)
+R_{\eta}(a,b)-c^{\alpha}\displaystyle{\,\,}_{a}I_{b}^{(\alpha)}(b-x)^{\alpha}(x-a)^{\alpha}w(x)\hspace{-0.8 cm},
\end{eqnarray*}
where
$$L_{\eta}(a,b):=\frac{1}{2^{\alpha}}\displaystyle{\,\,}_{a}I_{b}^{(\alpha)}\eta(f(a+b-x),f(x))w(x),$$
and
$$R_{\eta}(a,b):=\frac{\eta(f(a),f(b))+\eta(f(b),f(a))}{2^{\alpha}(b-a)^{\alpha}} \displaystyle{\,\,}_{a}I_{b}^{(\alpha)}(b-x)^{\alpha}w(x),$$
respectively.
\end{theorem}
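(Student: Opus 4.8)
The plan is to mirror the proof of Theorem~\ref{TeoHH}, carrying the symmetric weight $w$ through every step. I would establish the two displayed inequalities separately, and in each case the recipe is the same: apply the defining inequality of $\eta$-$GSC^{c}_{\alpha}([a,b])$ at a suitable convex combination, multiply through by $w(x)\geq 0^{\alpha}$ (which preserves the order since $w$ is $\mathbb{R}_{+}^{\alpha}$-valued), and then apply the monotone operator ${}_{a}I_{b}^{(\alpha)}$ over $[a,b]$. The boundedness of $\eta$ on $f([a,b])\times f([a,b])$ is used only to ensure that the local fractional integrals defining $L_{\eta}(a,b)$ and $R_{\eta}(a,b)$ are well defined.

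For the lower bound I would fix $x\in[a,b]$ and use the decomposition $\frac{a+b}{2}=\frac{1}{2}(a+b-x)+\frac{1}{2}x$. Applying the definition with $t=\frac{1}{2}$, $x'=a+b-x$ and $y'=x$, and using $\big((a+b-x)-x\big)^{2\alpha}=(a+b-2x)^{2\alpha}$, gives
$$f\left(\frac{a+b}{2}\right)\leq f(x)+\left(\frac{1}{2}\right)^{\alpha}\eta\big(f(a+b-x),f(x)\big)-\frac{c^{\alpha}}{4^{\alpha}}(a+b-2x)^{2\alpha}.$$
Isolating $f(x)$, multiplying by $w(x)$ and integrating termwise produces exactly the first inequality, the $\eta$-term becoming $L_{\eta}(a,b)$ and the modulus term becoming $\frac{c^{\alpha}}{4^{\alpha}}{}_{a}I_{b}^{(\alpha)}(a+b-2x)^{2\alpha}w(x)$. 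Here the order of the arguments of $\eta$ already agrees with that in $L_{\eta}(a,b)$, so no change of variable is needed for this half.

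For the upper bound I would write each $x\in[a,b]$ as the convex combination $x=\frac{b-x}{b-a}\,a+\frac{x-a}{b-a}\,b$ and apply the defining inequality twice, once with $b$ as base point $y'$ and once with $a$ as base point $y'$, obtaining
$$f(x)\leq f(b)+\frac{(b-x)^{\alpha}}{(b-a)^{\alpha}}\eta(f(a),f(b))-c^{\alpha}(b-x)^{\alpha}(x-a)^{\alpha}$$
and
$$f(x)\leq f(a)+\frac{(x-a)^{\alpha}}{(b-a)^{\alpha}}\eta(f(b),f(a))-c^{\alpha}(x-a)^{\alpha}(b-x)^{\alpha}.$$
Adding these, dividing by $2^{\alpha}$, multiplying by $w(x)$ and integrating gives an upper bound in which the two $\eta$-terms are weighted by ${}_{a}I_{b}^{(\alpha)}(b-x)^{\alpha}w(x)$ and ${}_{a}I_{b}^{(\alpha)}(x-a)^{\alpha}w(x)$, respectively. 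To merge these into the single expression $R_{\eta}(a,b)$ I would use the symmetry of $w$: the substitution $x\mapsto a+b-x$ in the local fractional integral, combined with $w(a+b-x)=w(x)$, yields ${}_{a}I_{b}^{(\alpha)}(x-a)^{\alpha}w(x)={}_{a}I_{b}^{(\alpha)}(b-x)^{\alpha}w(x)$, after which the two $\eta$-terms collapse into $R_{\eta}(a,b)$ and the remaining modulus term into $c^{\alpha}{}_{a}I_{b}^{(\alpha)}(b-x)^{\alpha}(x-a)^{\alpha}w(x)$.

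The main obstacle I anticipate is the careful bookkeeping around the local fractional change-of-variable formula together with the symmetry of $w$. One must verify that the substitution $x\mapsto a+b-x$ behaves as expected for ${}_{a}I_{b}^{(\alpha)}$ (the same device already used in the proof of Theorem~\ref{TeoHH}), and one must respect the generally non-commutative order of the arguments of $\eta$, since $\eta(f(a),f(b))$ and $\eta(f(b),f(a))$ need not coincide; it is precisely the symmetry of $w$ that allows the asymmetric factors $(b-x)^{\alpha}$ and $(x-a)^{\alpha}$ to be identified so that the two $\eta$-terms recombine. Once the identity ${}_{a}I_{b}^{(\alpha)}(x-a)^{\alpha}w(x)={}_{a}I_{b}^{(\alpha)}(b-x)^{\alpha}w(x)$ is in hand, both displayed inequalities follow by routine termwise integration.
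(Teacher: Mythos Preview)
Your proposal is correct and follows essentially the same approach as the paper: the paper parametrizes points of $[a,b]$ by $t\in[0,1]$ (writing $x=(1-t)a+tb$) and carries $w$ through the integrals over $[0,1]$ before changing variables back, whereas you work directly with $x\in[a,b]$, but the two routes are the same up to this change of variable. Your explicit identification of where the symmetry of $w$ is used---namely to obtain ${}_{a}I_{b}^{(\alpha)}(x-a)^{\alpha}w(x)={}_{a}I_{b}^{(\alpha)}(b-x)^{\alpha}w(x)$ so the two $\eta$-terms can be merged into $R_{\eta}(a,b)$---matches the paper's use of symmetry, which is hidden in the ``similarly'' producing its inequality~(3).
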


\begin{proof}
Since $f\in\eta$-$GSC^{\alpha}_{\alpha}([a,b])$, we have
\begin{eqnarray*}
f\left(\frac{a+b}{2}\right)&\leq& f((1-t)a+tb)+\frac{1}{2^{\alpha}}\,\eta(f((1-t)b+ta),f((1-t)a+tb))\\
&-& \frac{c^{\alpha}}{4^{\alpha}}\left(b-a\right)^{2\alpha}\left(1-2t\right)^{2\alpha}.
\end{eqnarray*}
Using the facts that $w\in \displaystyle I_{x}^{(\alpha)}[a,b]$ and $w$ symmetric with respect to $\frac{a+b}{2}$, we obtain that
\begin{eqnarray*}
f\left(\dfrac{a+b}{2}\right)\displaystyle{\,\,}_{a}I_{b}^{(\alpha)}w(x)&=& f\left(\dfrac{a+b}{2}\right)(b-a)^{\alpha}\displaystyle{\,\,}_{0}I_{1}^{(\alpha)}w((1-t)a+tb)\\\mbox{}
&\leq& (b-a)^{\alpha}\displaystyle{\,\,}_{0}I_{1}^{(\alpha)}f((1-t)a+tb)w((1-t)a+tb))\\
&+&\dfrac{(b-a)^{\alpha}}{2^{\alpha}}\displaystyle{\,\,}_{0}I_{1}^{(\alpha)}\eta(f((1-t)b+ta),f((1-t)a+tb))w((1-t)a+tb)\\
&-& \dfrac{c^{\alpha}}{4^{\alpha}}\left(b-a\right)^{3\alpha}\displaystyle{\,\,}_{0}I_{1}^{(\alpha)}\left(1-2t\right)^{2\alpha}w((1-t)a+tb)\\
&=&\displaystyle{\,\,}_{a}I_{b}^{(\alpha)}f(x)w(x)+\dfrac{1}{2^{\alpha}}\displaystyle{\,\,}_{a}I_{b}^{(\alpha)}\eta(f(a+b-x),f(x))w(x)\\
&-& \dfrac{c^{\alpha}}{4^{\alpha}}\displaystyle{\,\,}_{a}I_{b}^{(\alpha)}\left(a+b-2x\right)^{2\alpha}w(x).
\end{eqnarray*}
This shows the inequality of the left side of the theorem. Now, again using the facts that $w\in \displaystyle I_{x}^{(\alpha)}[a,b]$ and $w$ is symmetric with respect to $\frac{a+b}{2}$, we have that
\begin{eqnarray*}
\displaystyle{\,\,}_{a}I_{b}^{(\alpha)}f(x)w(x)&\leq& (b-a)^{\alpha}\displaystyle{\,\,}_{0}I_{1}^{(\alpha)}\left[f(b)+t^{\alpha}\eta(f(a),f(b))
-c^{\alpha}t^{\alpha}\left(1-t\right)^{\alpha}\left(b-a\right)^{2\alpha}\right]w(ta+(1-t)b)\\
&\leq&(b-a)^{\alpha}\displaystyle{\,\,}_{0}I_{1}^{(\alpha)}f(b)w(ta+(1-t)b)\\
&+&(b-a)^{\alpha}\eta(f(a),f(b))\displaystyle{\,\,}_{0}I_{1}^{(\alpha)}t^{\alpha}w(ta+(1-t)b)\\
&-&c^{\alpha}\left(b-a\right)^{3\alpha}\displaystyle{\,\,}_{0}I_{1}^{(\alpha)}t^{\alpha}\left(1-t\right)^{\alpha}w(ta+(1-t)b).\, \hspace{4cm} (2)
\end{eqnarray*}
Similarly,
\begin{eqnarray*}
\displaystyle{\,\,}_{a}I_{b}^{(\alpha)}f(x)w(x)&\leq&
(b-a)^{\alpha}\displaystyle{\,\,}_{0}I_{1}^{(\alpha)}f(a)w(ta+(1-t)b)\\
&+&(b-a)^{\alpha}\eta(f(b),f(a))\displaystyle{\,\,}_{0}I_{1}^{(\alpha)}t^{\alpha}w(ta+(1-t)b)\\
&-&c^{\alpha}\left(b-a\right)^{3\alpha}\displaystyle{\,\,}_{0}I_{1}^{(\alpha)}t^{\alpha}\left(1-t\right)^{\alpha}w(ta+(1-t)b).\, \hspace{4cm} (3)
\end{eqnarray*}
Then, adding (2) y (3), we obtain that
\begin{eqnarray*}
2^{\alpha}\displaystyle{\,\,}_{a}I_{b}^{(\alpha)}f(x)w(x)&\leq& (b-a)^{\alpha}[f(a)+f(b)] \displaystyle{\,\,}_{a}I_{b}^{(\alpha)}w(ta+(1-t)b)\\
&+&(b-a)^{\alpha}[\eta(f(a),f(b))+\eta(f(b),f(a))]\displaystyle{\,\,}_{0}I_{1}^{(\alpha)}t^{(\alpha)}w(ta+(1-t)b)\\
&-&2^{\alpha}c^{\alpha}\left(b-a\right)^{3\alpha}\displaystyle{\,\,}_{0}I_{1}^{(\alpha)}t^{\alpha}\left(1-t\right)^{\alpha}w(ta+(1-t)b).
\end{eqnarray*}
Applying the change of variable technique for local fractional integration of order $\alpha$, we conclude that
\begin{eqnarray*}
\displaystyle{\,\,}_{a}I_{b}^{(\alpha)}f(x)w(x)&\leq& \frac{f(a)+f(b)}{2^{\alpha}}\displaystyle{\,\,}_{a}I_{b}^{(\alpha)}w(x)
         + \frac{\eta(f(a),f(b))+\eta(f(b),f(a))}{2^{\alpha}(b-a)^{\alpha}} \displaystyle{\,\,}_{a}I_{b}^{(\alpha)}(b-x)^{\alpha}w(x)\\
&-&c^{\alpha}\displaystyle{\,\,}_{a}I_{b}^{(\alpha)}(b-x)^{\alpha}\left(x-a\right)^{\alpha}w(x),
\end{eqnarray*}
which completes the proof.
\end{proof}

\begin{remark}
If $w(x)=1^{\alpha}$ then Theorem \ref{TeoF} reduces to Theorem \ref{TeoHH}. Observe that if $\eta(x^{\alpha},y^{\alpha})= x^{\alpha}-y^{\alpha}$, then Theorem \ref{TeoF} is a Hermite-Hadamard-Fej\'er type inequality for generalized strongly convex functions.
\end{remark}


\end{document}